\newtheorem{thm}{Theorem}[section]
\newtheorem{lem}[thm]{Lemma}
\newtheorem{prop}[thm]{Proposition}
\theoremstyle{definition}
\theoremstyle{remark}
\numberwithin{equation}{section}
\newcommand{\R}{\mathbb R}
\def\RP{\mathbb{R}P}
\def\CP{\mathbb{C}P}
\def\C{\mathbb{C}}
\def\Lc{\mathcal{L}}
\def\Uc{\mathcal{U}}
\newcommand{\eps}{\varepsilon}
\def\d{\partial}
\def\0{\varnothing}
\newcommand\pd[2]{\frac{\partial #1}{\partial #2}}
\begin{document}

\title[Planarizations]{Planarizations and maps taking lines to linear
webs of conics}
\author{Vladlen Timorin}

\address{Faculty of Mathematics and Laboratory of Algebraic Geometry\\
National Research University Higher School of Economics\\
7 Vavilova St 112312 Moscow, Russia}

\address[Vladlen~Timorin]
{Independent University of Moscow\\
Bolshoy Vlasyevskiy Pereulok 11, 119002 Moscow, Russia}

\email{vtimorin@hse.ru}

\thanks{
Partially supported by
the Deligne fellowship, the Simons-IUM fellowship, RFBR grants 10-01-00739-a,
11-01-00654-a, MESRF grant MK-2790.2011.1, and
AG Laboratory NRU-HSE, MESRF grant ag. 11 11.G34.31.0023
}

%\subjclass{}%
%\keywords{}%

%\date{}%
%\dedicatory{}%
%\commby{}%
% ----------------------------------------------------------------
\begin{abstract}
Aiming at a generalization of a classical theorem of M\"obius,
we study maps that take line intervals to plane curves, and
also maps that take line intervals to conics from certain linear systems.
\end{abstract}
\maketitle
% ----------------------------------------------------------------
\section{Introduction}

\subsection{The problem}
In 1827, M\"obius \cite{M} proved that a continuous one-to-one map
$\RP^2\to\RP^2$ that takes any straight line to a straight line
is a projective transformation.
As was noted later by von Staudt \cite{vS}, the continuity assumption was
superfluous.

M\"obius used the term {\em collineation} for a map that takes
collinear points to collinear points.
More precisely, let $U\subset\RP^2$ be an open subset.
A map $F:U\to\RP^n$ is called a {\em collineation} if
there exists an open set of lines $L\subset\RP^2$ such that
$L\cap U\ne\0$,
and the set $F(U\cap L)$ is collinear,
i.e. lies in a projective line.
A very minor modification of M\"obius' original argument yields the following theorem.
{\em Any continuous collineation $F:U\to\RP^n$ is a restriction of a
projective embedding, or a map to a subset of a projective line,
possibly after replacing $U$ with a smaller open set.}

By analogy, we define a {\em planarization} as a
map $F:U\to\RP^n$, for which there exists an open set of lines $L\subset\RP^2$
such that $U\cap L\ne\0$, and the set $F(U\cap L)$ lies in a projective hyperplane.
We consider the following problem:
{\em describe all planarizations $F:U\to\RP^n$}.
This problem is a direct generalization of the problem considered by M\"obius.

On the other hand, it is motivated by the following class of problems.
Let $\Lc$ be a linear system of algebraic curves in $\RP^2$, and
$U\subset\RP^2$ an open set.
We say that a map $f:U\to\RP^2$ {\em takes all lines to $\Lc$-curves}
if $f(U\cap L)$ belongs to a curve from $\Lc$ for every line $L\subset\RP^{2}$.
For various linear systems $\Lc$, we would like to know all sufficiently
smooth maps that take lines to $\Lc$-curves.
In the case, where $\Lc$ consists of all circles, this problem came
from Nomography (see e.g. \cite{GKh}); it was solved by A. Khovanskii \cite{Kh}.
Maps taking lines to circles have also been investigated in higher dimensions
\cite{Iz,rect,cliff}.
Large classes of such maps are provided by generalized Hopf fibrations \cite{maplc}
and, more generally, by quadratic rational parameterizations of spheres
\cite{circquad}.

If a sufficiently smooth map $f:U\to\RP^2$ takes lines to $\Lc$-curves,
then it gives rise to a planarization $F=\Phi_{\Lc}\circ f:U\dashrightarrow\RP^n$,
where $\Phi_\Lc:\RP^2\dashrightarrow\RP^n$ is the rational map, whose homogeneous
components are generating equations for $\Lc$ (more details are given below).
This was my original motivation for studying planarizations.

\subsection{Examples of planarizations}
Let us now give some examples of planarizations.

{\em Trivial planarizations.} A trivial planarization is a map
$F:U\to\RP^n$, whose image lies in some hyperplane.

{\em Co-trivial planarizations.}
A planarization $F:U\to\RP^n$ is {\em co-trivial} if there is a point
$o\in\RP^n$ such that, for some open set of lines $L\subset \RP^2$,
the set $F(U\cap L)$ is contained in a hyperplane passing through $o$.
An example of a co-trivial planarization can be constructed as follows.
Consider any planarization $G:U\to\R^{n-1}$, e.g. a projective map.
Also consider any continuous function $\phi:U\to\R$.
Then we can define the map $F:U\to\R^n=\R^{n-1}\times\R$
by the formula $F(u)=(G(u),\phi(u))$.
This is a co-trivial planarization
(the corresponding point $o$ is $(0,\infty)$, i.e. the intersection of
the projective closure of $0\times\R$ with the hyperplane at infinity).

{\em Rational degree $d$ maps.}
A rational degree $d$ map is a rational map $\RP^2\dashrightarrow\RP^n$
given in homogeneous coordinates by homogeneous degree $d$ polynomials without
a nonconstant common polynomial factor.
A rational degree $d$ map can have some {\em points of indeterminacy}, where
all defining polynomials vanish simultaneously.
Any rational degree $d$ map $\RP^2\dashrightarrow\RP^n$  with $n>d$ is a planarization.
To prove this statement, it suffices to show that the image of
every rational degree $d$ map $\RP^1\dashrightarrow\RP^n$ is contained in a
$d$-dimensional projective subspace.
Let $[u_0:u_1]$ be homogeneous coordinates in $\RP^1$.
The lift of a rational degree $d$ map $Q:\RP^1\dashrightarrow\RP^n$ to $\R^{n+1}$
has the form
$$
\tilde Q(u_0,u_1)=u_0^d A_0 + u_0^{d-1}u_1A_1 +\dots + u_1^dA_d,
$$
where $A_0$, $\dots$, $A_d$ are some constant vectors in $\R^{n+1}$.
Hence the image of $Q$ is contained in the projectivization of
the subspace spanned by $A_0$, $\dots$, $A_d$.
This projective subspace is at most $d$-dimensional.

{\em Duality.}
Projective duality between points and hyperplanes
yields a remarkable duality on planarizations.
Recall that points $\hat x$ in $\RP^{n*}$ correspond to hyperplanes
$P_{\hat x}$ in $\RP^n$.
In particular, points $\hat a\in\RP^{2*}$ correspond to lines
$L_{\hat a}\subset\RP^2$, and points $a\in\RP^2$ correspond to lines
$L_{a}\subset\RP^{2*}$.
Let $U\subset\RP^2$ be an open set, and $F:U\to\RP^n$ a planarization.
Then we can define the dual planarization $\hat F:\hat U\to\RP^{n*}$ as follows.
By definition, the set $\hat U\subset\RP^{2*}$ is the set of all
points $\hat a\in\RP^{2*}$ such that
$F(U\cap L_{\hat a})$ lies in a unique hyperplane $P_{\hat x}$.
We set $\hat F(\hat a)=\hat x$.
Thus we defined a map $\hat F:\hat U\to\RP^{n*}$.
We need to prove that this map is a planarization: for every $a\in U$,
the set $\hat F(\hat U\cap L_a)$ is contained in a hyperplane.
Indeed, the set $\hat F(\hat U\cap L_a)$ consists of points $\hat x$
such that $P_{\hat x}$ contains the set $F(U\cap L)$ for some line
$L$ passing through $a$.
But these hyperplanes $P_{\hat x}$ contain the point $F(a)$.
Therefore, all corresponding points $\hat x$ lie in the hyperplane $P_{F(a)}$.
Note that the set $\hat U$ may be empty.
In this case, we say that {\em the dual of $F$ is empty}.
If $\hat F$ is nonempty, and its dual is also nonempty, then
the dual of $\hat F$ coincides with $F$ on some open subset of $U$,
as follows easily from the preceding discussion.

Duality gives rise to more examples of planarizations.
E.g. we can consider dual planarizations to quadratic rational maps $U\to\RP^3$.
These dual planarizations are not, in general, quadratic rational maps.
They are cubic; although not immediately obvious, it can be verified by direct computation.

\subsection{Main results}
The main result of this paper is the following theorem about planarizations
with values in $\RP^3$ (and its applications to maps that take lines to conics):

\begin{thm}
\label{t:2-plan}
  Let $F:U\to\RP^3$ be a planarization.
  Then there exists an open subset $U'\subset U$ such that at least one of the
  following holds:
\begin{itemize}
  \item the map $F|_{U'}$ is a trivial planarization;
  \item the map $F|_{U'}$ is a co-trivial planarization;
  \item the map $F|_{U'}$ is a rational map of degree at most 3.
\end{itemize}
\end{thm}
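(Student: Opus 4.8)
The plan is to exploit the duality on planarizations described above, reducing the theorem to a degeneracy analysis of the dual map $\hat F$ followed by a reconstruction of $F$ from its restrictions to lines. The computation underlying duality is the following. Lift $F$ locally to a map $\tilde F:U\to\R^4\setminus 0$ into the cone over $\RP^3$. A line $L_{\hat a}$ through a point $u\in U$ meets $U$ along some direction $v=(v_1,v_2)$, and since $F(U\cap L_{\hat a})$ is a planar curve, the plane containing it is its osculating plane at $u$, namely the projectivization of the span of $\tilde F$, $D_v\tilde F$, $D_v^2\tilde F$, where $D_v=v_1\partial_x+v_2\partial_y$. Hence the annihilating covector is $\xi(u,v)=\star(\tilde F\wedge D_v\tilde F\wedge D_v^2\tilde F)$ and $\hat F(\hat a)=[\xi(u,v)]$. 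As $\tilde F$, $D_v\tilde F$, $D_v^2\tilde F$ are homogeneous of degrees $0,1,2$ in $v$, the covector $\xi(u,v)$ is homogeneous of degree $3$ in $v$; and as $\hat a$ ranges over $L_u\subset\RP^{2*}$ the direction $[v_1:v_2]$ ranges over the full $\RP^1$, which is a projective coordinate on $L_u$. Therefore, for every $u$, the restriction $\hat F|_{L_u}$ is a rational curve of degree at most $3$. In short: for any planarization into $\RP^3$, the dual planarization restricts to every line $L_u$ as a rational curve of degree $\le 3$.

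Next I would shrink $U$ to an open $U'$ on which the rank of the osculating data is constant and split into cases. If $\tilde F$, $D_v\tilde F$, $D_v^2\tilde F$ are linearly dependent for all $u,v$, then $F$ carries every line into a projective line, so $F$ is a collineation for every line; by the M\"obius-type theorem quoted in the introduction, $F|_{U'}$ is then a restriction of a projective embedding or a map into a projective line, and in either case the image lies in a plane, so $F|_{U'}$ is trivial. Otherwise the osculating plane is genuinely two-dimensional, so the plane containing each $F(U\cap L_{\hat a})$ is unique and $\hat F$ is a well-defined planarization $\hat U\to\RP^{3*}$ on a nonempty open set; I would then subdivide by the size of its image. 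If the image of $\hat F$ is a single point, all the hyperplanes $P_{\hat F(\hat a)}$ coincide and the image of $F$ lies in one plane, so $F|_{U'}$ is trivial. If the image of $\hat F$ lies in a hyperplane $P_o^{*}\subset\RP^{3*}$ without being a point, then by the definition of duality every plane $P_{\hat F(\hat a)}$ passes through the fixed point $o$, which is precisely the assertion that $F|_{U'}$ is co-trivial.

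The remaining, generic case is when the image of $\hat F$ is contained in no hyperplane, and this is where the main work lies. Here $\hat F$ is a nondegenerate planarization whose dual is nonempty, so the double-duality remark gives $\widehat{\hat F}=F$ on an open subset of $U$. Applying the first-paragraph computation to $\hat F$ in place of $F$, I obtain that $\widehat{\hat F}=F$ restricts to each line $L_{\hat u}$ (for $\hat u$ in the domain of $\hat F$) as a rational curve of degree at most $3$, and these lines sweep out an open two-parameter family in $\RP^2$. Thus $F$ restricts to an open family of lines as a rational curve of degree $\le 3$. The final and hardest step is the reconstruction: to upgrade ``rational of degree $\le 3$ on every line of an open family'' to ``$F$ is, on a smaller open set, the restriction of a rational map $\RP^2\dashrightarrow\RP^3$ of degree at most $3$.'' I expect this to require showing that the coefficients of the line-wise parametrizations depend rationally on the line, assembling them into a single rational map, and then invoking the degree criterion from the examples — that a degree-$d$ rational map restricts to each line as a curve spanning a subspace of dimension at most $d$ — to pin down the total degree and to exclude a spurious increase caused by the base locus. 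This reconstruction, rather than the case analysis, is the principal obstacle.
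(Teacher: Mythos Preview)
Your proposal is correct and follows essentially the same route as the paper: the degenerate case handled via the M\"obius theorem (the paper's Proposition~\ref{P:degen}), the co-trivial case via triviality of $\hat F$, and the generic case via double-duality together with the line-wise degree bound (Proposition~\ref{P:cubic}) and the reconstruction step you flag as the main obstacle (Proposition~\ref{p:cubic-on-lines}). Your case split on the image of $\hat F$ is equivalent to the paper's nondegeneracy split once you note---and you have the argument already, from your first case applied to $\hat F$---that a planarization whose image lies in no hyperplane must be nondegenerate somewhere.
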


Not all rational maps $F:\RP^2\dashrightarrow\RP^3$ of degree 3 are planarizations.
However, some of them are (e.g. dual planarizations to generic quadratic
rational maps).
Thus Theorem \ref{t:2-plan} does not give a complete classification of all
planarizations.
It remains to describe cubic rational planarizations.
I think that this is an interesting open problem.

Now consider a linear system $\Lc$ of conics in $\RP^2$.
By definition, any conic from $\Lc$ is given by an equation of the form
$$
\lambda_0\phi_0+\dots+\lambda_n\phi_n=0,
$$
where $\phi_0$, $\dots$, $\phi_n$ are linearly independent real quadratic forms in
homogeneous coordinates of $\RP^2$, and $\lambda_0$, $\dots$, $\lambda_n$
are some real numbers.
The latter are only defined up to a common nonzero factor, thus
they can be thought of as homogeneous coordinates in $\Lc$.
The number $n$ is called the {\em dimension} of $\Lc$.
Linear systems of dimension one are called {\em pencils}, those of
dimension two are called {\em nets}, and those of dimension three
are called {\em webs}.
As an application of Theorem \ref{t:2-plan}, we will prove the following theorem.

\begin{thm}
\label{t:linsys3}
  Consider a linear web $\Lc$ of conics in $\RP^2$.
  Let $f:U\to\RP^2$ be a sufficiently smooth map that takes lines to $\Lc$-curves.
  Then there exists an open subset $U'\subset U$, on which one of the following holds:
\begin{enumerate}
\item
  the set $f(U')$ is a subset of a conic from $\Lc$;
\item
  the map $f|_{U'}$ is a local inverse of a quadratic rational map;
\item
  the map $f|_{U'}$ is a quadratic rational map;
\item
  the map $f|_{U'}$ is a local branch of the multi-valued map 
  $\Phi^{-1}\circ F$, where $\Phi$, $F:\RP^2\dashrightarrow Q$ are quadratic
  rational maps to the same irreducible quadric $Q\subset\RP^3$.
\end{enumerate}
\end{thm}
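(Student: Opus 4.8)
The plan is to reduce Theorem~\ref{t:linsys3} to Theorem~\ref{t:2-plan} by passing to the associated planarization, and then to unwind each of the three cases of Theorem~\ref{t:2-plan} back into statements about $f$. Since $\Lc$ is a web of conics, it has dimension three, so there are four linearly independent quadratic forms $\phi_0,\dots,\phi_3$ generating $\Lc$, and the rational map $\Phi_\Lc:\RP^2\dashrightarrow\RP^3$ sending a point to $[\phi_0:\phi_1:\phi_2:\phi_3]$ is exactly a quadratic rational map to $\RP^3$. The composition $F=\Phi_\Lc\circ f:U\dashrightarrow\RP^3$ is a planarization: by hypothesis $f(U\cap L)$ lies on a conic $C_\lambda$ from $\Lc$, and a conic $C_\lambda=\set{\lambda_0\phi_0+\dots+\lambda_3\phi_3=0}$ pulls back under $\Phi_\Lc$ to the hyperplane $\set{\lambda_0 x_0+\dots+\lambda_3 x_3=0}$ in $\RP^3$, so $F(U\cap L)=\Phi_\Lc(f(U\cap L))$ lies in that hyperplane. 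Openness of the family of lines is inherited from the hypothesis on $f$, so $F$ is genuinely a planarization and Theorem~\ref{t:2-plan} applies on some open $U'$.

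Next I would analyze the three alternatives provided by Theorem~\ref{t:2-plan} after shrinking $U'$ so that $f$ and $\Phi_\Lc$ are defined and smooth there. If $F|_{U'}$ is a \emph{trivial} planarization, its image lies in a fixed hyperplane $P_{\hat x}\subset\RP^3$; pulling back, $f(U')$ lies in the conic $\Phi_\Lc^{-1}(P_{\hat x})$, which is a curve of $\Lc$, giving case~(1). The \emph{co-trivial} case should also feed into case~(1) or be ruled out by the smoothness and genericity assumptions, since co-triviality forces the hyperplanes through a fixed point $o\in\RP^3$, which constrains $f$ to map into a distinguished subfamily; here I expect to use that a sufficiently smooth $f$ whose planarization is co-trivial must already have image in a single conic, again case~(1). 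The substantive case is when $F|_{U'}$ is a \emph{rational map of degree at most $3$}. Writing $F=\Phi_\Lc\circ f$ with $\Phi_\Lc$ quadratic, I would exploit that $\Phi_\Lc$ maps $\RP^2$ onto a projective variety $Q\subset\RP^3$ — the image of a generic quadratic map, which is an irreducible quadric surface (or degenerates, handled separately). Thus $F$ factors through $Q$, and since $F$ is rational of degree $\le 3$ while $\Phi_\Lc$ is a fixed quadratic parameterization of $Q$, the map $f$ is recovered as a branch of $\Phi_\Lc^{-1}\circ F$.

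The heart of the argument is a degree count on this factorization. I would compare the degree of $F$ with the degree of $f$ under the quadratic $\Phi_\Lc$: composing a quadratic map with $f$ of degree $d$ typically yields degree $2d$, so $\deg F\le 3$ pins $f$ to low degree. When $\deg F\le 2$, one teases out whether $f$ itself is a quadratic rational map (case~(3)) or a \emph{local inverse} of one (case~(2)), the latter arising when $\Phi_\Lc$ itself plays the role of the forward quadratic map and $f$ undoes it. When $\deg F=3$ and no such collapse occurs, $f$ is genuinely a local branch of $\Phi^{-1}\circ F$ with both $\Phi=\Phi_\Lc$ and $F$ quadratic rational maps onto the same irreducible quadric $Q$, which is exactly case~(4). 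The main obstacle will be this bookkeeping of degrees and the careful separation of the non-generic situations — when $Q$ is reducible or singular, or when $\Phi_\Lc$ has a base locus meeting $f(U')$ — where the naive degree arithmetic fails and one must shrink $U'$ and invoke the finiteness of the base locus to stay away from indeterminacy, so that $\Phi_\Lc^{-1}$ admits a well-defined smooth local branch on $f(U')$.
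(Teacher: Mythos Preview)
Your reduction to Theorem~\ref{t:2-plan} via $F=\Phi_\Lc\circ f$ is correct, and the trivial case is handled correctly. But the remaining two cases are both mishandled, and the errors are substantive, not cosmetic.

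\textbf{The co-trivial case.} This does \emph{not} feed back into case~(1). Co-triviality of $F$ means that the hyperplanes containing the images $F(U\cap L)$ all pass through a fixed point $o\in\RP^3$. Translated back through $\Phi_\Lc$, this says that the conics $f(U\cap L)$ all lie in a fixed \emph{net} $\Lc'\subset\Lc$ (the hyperplanes through $o$ form a $\RP^2$ inside $\Lc\cong\RP^3$). So $f$ takes lines to $\Lc'$-curves for a two-dimensional linear system, and one invokes Proposition~\ref{p:linsys2} (the M\"obius argument applied to $\Phi_{\Lc'}\circ f$) to conclude that $f$ is a local inverse of a quadratic rational map. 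This is precisely case~(2), and it is the \emph{only} source of case~(2) in the proof. Your expectation that co-triviality forces the image into a single conic is simply false.

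\textbf{The rational case.} Your claim that the image of $\Phi_\Lc$ is generically an irreducible quadric is wrong: the Zariski closure $S\subset\CP^3$ of $\Phi_\Lc(\CP^2)$ is an irreducible surface of degree at most $4$, and the proof hinges on a dichotomy on $\deg S$, not on $\deg F$. When $\deg S=2$, one uses Lemma~\ref{l:cub-qua1} to force $F$ itself to be a quadratic rational map (a cubic map into an irreducible quadric is impossible), and then $f$ is a local branch of $\Phi_\Lc^{-1}\circ F$ with both $\Phi_\Lc$ and $F$ quadratic onto the same quadric: this is case~(4). When $\deg S\ge 3$, the key point --- which your degree bookkeeping cannot supply, since you do not yet know $f$ is rational --- is that $\Phi_\Lc:\CP^2\dashrightarrow S$ is \emph{birational} (Lemma~\ref{l:Phi-birat}). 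Only then can you write $f=\Phi_\Lc^{-1}\circ F$ as an honest rational map; and then Lemma~\ref{l:rat} (a rational map taking lines to conics has degree at most two) gives case~(3). Your argument that ``$\deg F\le 3$ and $\Phi_\Lc$ quadratic force $\deg f$ small'' presupposes $f$ rational, which is exactly what is at stake.
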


It is desirable to have a description of all sufficiently smooth maps
that take lines to conics.
An intermediate step would be to describe all maps that take lines
to $\Lc$-curves for linear systems $\Lc$ of conics of dimension 4
(the system of all conics has dimension 5).
Maps that take linear pencils of lines to conics have been studied
in \cite{rectcon}.

\paragraph{\em Organization of the paper.}
In Section 2, we prove Theorem \ref{t:2-plan}.
As an immediate application of this theorem, we give a short proof of
a theorem of Khovanskii \cite{Kh} that describes all sufficiently
smooth maps taking lines to circles.
In Section 3, we prove Theorem \ref{t:linsys3}.

\paragraph{\em Acknowledgements.}
I am grateful to A. Khovanskii for getting me interested in the subject, and to
J. Bernstein, who suggested to replace the system of all circles with
an arbitrary linear system of conics.
I am also grateful to L. Sukhanov for an example that led to a correction
in Theorem \ref{t:linsys3}.

\section{Planarizations}

Suppose that $U\subset\R^2$, and let $F:U\mapsto\RP^n$ be a planarization.
Suppose that $F(U)$ lies in an affine chart of $\RP^n$
so that we can think of $F$ as a map from $U$ to $\R^n$.
Suppose also that $F$ is sufficiently many times differentiable in $U$.
Fix a point $a\in U$.
Let $j^{n-1}_aF:\R^2\to\R^n$ be the Taylor polynomial for $F$ at $a$
of degree $n-1$.
We say that $F$ is {\em nondegenerate} at the point $a$
if for some line $L$ such that $L\ni a$
there is a unique hyperplane containing the set $j^{n-1}_aF(L\cap \R^2)$
(note that the polynomial $j^{n-1}_aF$ is defined on all $\R^2$).
Clearly, in this case the set $F(U\cap L)$ is contained in the same hyperplane.
It is easy to check that the notion of nondegeneracy does not
depend on the choice of an affine chart.
If $F$ is nondegenerate at least at one point, then the dual planarization
$\hat F$ is non-empty.

\begin{prop}
\label{P:cubic}
 Suppose that $F$ is nondegenerate at a point $a\in U$.
 Then the restriction of $\hat F$ to $L_a\cap\hat U$ is a rational map
 of degree at most $\frac{n(n-1)}2$.
\end{prop}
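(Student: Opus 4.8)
The plan is to compute $\hat F$ along $L_a$ explicitly from the derivatives of $F$ at $a$ and to read off the degree bound from a single determinant. First I would fix an affine chart, so that $a\in U\subset\R^2$ and $F:U\to\R^n$, and parameterize the lines through $a$ by their direction $v=(v_1,v_2)$. This gives homogeneous coordinates $[v_1:v_2]$ on the line $L_a\cong\RP^1$, a point $\hat a\in L_a$ corresponding to the line $L_{\hat a}=a+\R v$. If $\hat a\in\hat U$, then $F(U\cap L_{\hat a})$ lies in a unique hyperplane; writing its equation in the chart as $\langle\xi,\,\cdot\,\rangle=\mathrm{const}$ with coefficient vector $\xi\in\R^n$ and differentiating the identity $\langle\xi,F(a+tv)\rangle=\mathrm{const}$ at $t=0$, I obtain
$$
\langle\xi,\,D^kF(a)(v,\dots,v)\rangle=0,\qquad k=1,\dots,n-1.
$$
These are precisely the conditions read off from $j^{n-1}_aF$, which is why nondegeneracy is phrased in terms of the $(n-1)$-jet.

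Next I would assemble the $(n-1)\times n$ matrix $M(v)$ whose $k$-th row is $D^kF(a)(v,\dots,v)$, a vector in $\R^n$ with entries homogeneous of degree $k$ in $(v_1,v_2)$. The vector $\xi$ spans the kernel of $M(v)$, so up to scale its components are the signed maximal minors of $M(v)$ (the $j$-th component being $\pm\det$ of $M(v)$ with the $j$-th column deleted). Since the $k$-th row contributes degree $k$, each such $(n-1)\times(n-1)$ determinant is homogeneous of degree $1+2+\dots+(n-1)=\frac{n(n-1)}2$. The hyperplane $P_{\hat F(\hat a)}$ is then $\langle\xi,\,\cdot\,\rangle=\langle\xi,F(a)\rangle$, i.e. the point of $\RP^{n*}$ with homogeneous coordinates $(\xi,-\langle\xi,F(a)\rangle)$; the last coordinate is a constant-coefficient combination of the $\xi_j$, hence also homogeneous of degree $\frac{n(n-1)}2$. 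Thus $\hat a\mapsto\hat F(\hat a)$ is given in the coordinate $[v_1:v_2]$ by homogeneous polynomials of degree $\frac{n(n-1)}2$, that is, by a rational map $\RP^1\dashrightarrow\RP^{n*}$ of degree at most $\frac{n(n-1)}2$.

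It remains to check that this rational map really is $\hat F|_{L_a\cap\hat U}$, and here the nondegeneracy hypothesis does the work. At the nondegeneracy direction the $(n-1)$-jet spans a hyperplane, so $M(v)$ has rank $n-1$ there; consequently the maximal minors do not all vanish identically, their common zero locus is a proper (hence finite) subset of $L_a$, and off it $M(v)$ has rank exactly $n-1$. On this open dense set the cofactor vector $\xi(v)$ is nonzero and, being a solution of the $n-1$ equations above, must be proportional to the coefficient vector of the unique hyperplane $P_{\hat F(\hat a)}$. Hence $\hat F$ agrees with the constructed rational map on an open subset of $L_a\cap\hat U$, and since a rational map on $\RP^1$ is determined by its restriction to any open set, they coincide wherever $\hat F$ is defined.

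The step I expect to be the main obstacle is precisely this last identification: one must guarantee both that the cofactor formula does not degenerate to the zero vector and that the hyperplane singled out by the finite jet genuinely coincides with the unique hyperplane containing the full image $F(U\cap L_{\hat a})$. The first is handled by the rank argument above; for the second, the point is that any hyperplane containing $F(U\cap L_{\hat a})$ automatically satisfies the jet equations, while nondegeneracy forces those equations to have a one-dimensional solution space on a dense open set, so the jet-hyperplane and the true hyperplane cannot differ. Everything else is the routine degree bookkeeping of the determinant.
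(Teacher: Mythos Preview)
Your proof is correct and follows essentially the same line as the paper's: the paper parameterizes lines through $a$ by their slope $\lambda$, writes the restricted Taylor polynomial as $\sum_{l=0}^{n-1} u^l B_l(\lambda)$ with $\deg_\lambda B_l\le l$, and forms the wedge $\iota(B_0)\wedge\iota(B_1)\wedge\dots\wedge\iota(B_{n-1})$ in $\Lambda^n\R^{n+1}$ to obtain a polynomial of degree at most $0+1+\dots+(n-1)=\tfrac{n(n-1)}2$ defining the hyperplane --- exactly your cofactor-minor computation in wedge-product language. The paper is terser on the identification step you flag at the end (that the jet-determined hyperplane coincides with $\hat F$ on a dense open set of $L_a$), but the content is the same.
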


\begin{proof}
  We can assume that $U$ lies in an affine chart and that $a$ is the origin.
  Let $u$, $v$ be affine coordinates on $U$.
  Then $j^{n-1}_aF$ has the form
  $$
  j^{n-1}_aF(u,v)=\sum_{i+j\le n-1} u^iv^j A_{i,j},\quad
  A_{i,j}=\frac{1}{i!j!}\frac{\d^{i+j}F}{\d u^i\,\d v^j}(0,0)
  $$
  by the Taylor formula.
  Consider a line $L=L_{\hat a}$, where $\hat a\in\hat U$.
  Suppose that, in coordinates $(u,v)$, it is given by the equation
  $v=\lambda u$, where $\lambda$ is a constant (the slope of $L$).
  Then the restriction of $j^{n-1}_aF$ to $L$ is parameterized by $u$
  as follows:
  $$
  j^{n-1}_aF(u,\lambda u)=\sum_{i+j\le n-1} \lambda^j u^{i+j} A_{i,j}=
  \sum_{l=0}^{n-1} u^lB_l(\lambda),\quad B_l(\lambda)=\sum_{j=0}^l\lambda^j A_{l-j,j}.
  $$
  Note that $B_l(\lambda)$ is a polynomial of degree $\le l$.
  Consider the embedding $\iota:\R^n\to\R^{n+1}$ given by the formula
  $x\mapsto (x,1)$.
  Then the $(n-1)$-plane $\Pi(\lambda)$ containing the set $j^{n-1}_aF(L\cap\R^2)$
  consists of all points $x\in\R^n$ such that $\iota(x)\wedge\Omega(\lambda)=0$,
  where
  $$
  \Omega(\lambda)=\iota(B_0)\wedge\iota(B_1)\wedge\dots\wedge\iota(B_{n-1})
  $$
  The $n$-vector $\Omega(\lambda)$ depends on $\lambda$ polynomially of degree
  at most $0+1+\dots+(n-1)=\frac{n(n-1)}2$
  and is not identical zero.
\begin{comment}
  Note that points of the line $L_a$ are parameterized by $\lambda$.
  It follows from the above that the restriction of $\hat F$ to the subset of
  $L_{a}\cap\hat U$
  corresponding to parameter values $\lambda$ with $\Omega(\lambda)\ne 0$
  is a rational function of degree at most $\frac{n(n-1)}2$.
  By continuity, the restriction of $\hat F$ to the entire set
  $L_a\cap\hat U$ is a rational function of degree at most $\frac{n(n-1)}2$.
\end{comment}
\end{proof}

Suppose that $F$ is nondegenerate at least at one point.
Then there is an open set, at every point of which $F$ is nondegenerate.
Therefore, the dual map $\hat F$ becomes rational of degree $\frac{n(n-1)}2$
when restricted to any line in some open set of lines.
We have the following statement about such maps, which generalizes well-known
facts about maps, whose restrictions to lines are polynomials.

\begin{prop}
 \label{p:cubic-on-lines}
Let $V\subset\RP^2$ be an open subset, and $f:V\to\R$ a function, for which there
exists an open set of lines $L\subset\RP^2$ such that $L\cap V\ne\0$ and
$f|_{L\cap V}$ is a rational function of degree $d$.
Then the function $f$ is also a rational function of degree $d$, possibly
after restriction to a smaller open set.
\end{prop}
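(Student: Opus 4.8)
The plan is to reduce the statement to the well-known polynomial case together with a piece of linear algebra. Recall the template for polynomials: if the restriction of $f$ to each line of an open family has degree $\le d$, one fixes a direction $\xi$, notes that the $(d+1)$-st directional derivative $\partial_\xi^{d+1}f$ vanishes along every such line, and lets $\xi$ range over an open cone; since $\partial_\xi^{d+1}f$ is a form of degree $d+1$ in $\xi$ whose coefficients are the order-$(d+1)$ partials of $f$, all of those vanish and $f$ is a polynomial of degree $\le d$. For rational restrictions the condition ``degree $\le d$'' is no longer the vanishing of a single derivative but a nonlinear, Wronskian-type, condition, so I would not differentiate directly; instead I would recast it as a linear-dependence condition and then glue the line-wise data into a global relation $Qf=P$.

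Concretely, I would work in an affine chart with coordinates $(x,y)$, parameterize the open family of lines as $y=\lambda x+\mu$ with $(\lambda,\mu)$ in an open set, and use $t=x$ as the affine parameter. Writing $g_{\lambda,\mu}(t)=f(t,\lambda t+\mu)$, the hypothesis says $g_{\lambda,\mu}=P_{\lambda,\mu}/Q_{\lambda,\mu}$ with $\deg P_{\lambda,\mu},\deg Q_{\lambda,\mu}\le d$; equivalently the $2d+2$ functions $1,t,\dots,t^d,g_{\lambda,\mu},\dots,t^dg_{\lambda,\mu}$ are linearly dependent. On a dense open set of lines the reduced degree is exactly $d$, so this minimal relation is unique up to a scalar, and its coefficients $(p_0,\dots,p_d,q_0,\dots,q_d)$ are ratios of the maximal minors of the associated Wronskian (or sampling) matrix; in particular they are real-analytic in $(\lambda,\mu)$ once $f$ is real-analytic. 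I would secure this regularity first on a smaller open set, which costs nothing in our application, where $f$ is a coordinate of $\hat F$ and is as smooth as $F$. The decisive move is the substitution $\mu=y-\lambda x$: the line-wise identity $\sum_k q_k(\lambda,\mu)\,t^k g_{\lambda,\mu}(t)=\sum_k p_k(\lambda,\mu)\,t^k$ becomes, for each fixed admissible $\lambda$, an identity $A_\lambda(x,y)\,f(x,y)=B_\lambda(x,y)$ on an open subset of $V$, with $A_\lambda(x,y)=\sum_k q_k(\lambda,y-\lambda x)\,x^k$ and $B_\lambda$ defined analogously.

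It then remains to see that $A_\lambda,B_\lambda$ are genuine polynomials in $(x,y)$ of degree $\le d$, for which I would show that $q_k,p_k$ can be normalized to be polynomials in $(\lambda,\mu)$ of degree $\le d-k$. Here I would apply the polynomial-restriction principle in the \emph{dual} variables: fixing a sample parameter value $t_m$ and letting $(\lambda,\mu)$ run along a line, the point $(t_m,\lambda t_m+\mu)$ traces a line in $(x,y)$, so the relevant minors become rational functions of $(\lambda,\mu)$ of controlled degree, and after the correct scalar normalization polynomial ones; matching the resulting bidegrees assembles $\sum_k q_k(\lambda,y-\lambda x)x^k$ into a degree-$\le d$ polynomial in $(x,y)$. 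Then $A_\lambda f=B_\lambda$ with $A_\lambda\not\equiv 0$ exhibits $f=B_\lambda/A_\lambda$ as a rational function of degree $\le d$ on an open subset, as claimed.

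The hard part is precisely this last degree control. Merely producing \emph{some} rational expression for $f$ is comparatively cheap, but the bound ``degree $d$'' forces one to choose the scalar normalization of the line-wise relations so that the coefficients become low-degree polynomials rather than arbitrary analytic or rational functions of $(\lambda,\mu)$; equivalently, one must rule out spurious common factors and handle the degenerate lines, where the reduced degree drops, the minors vanish simultaneously, and the normalized coefficient vector could a priori jump. I would confine these bad lines to a proper analytic subset, extend the polynomial dependence of $q_k,p_k$ across it, and invoke continuity to propagate the identity $A_\lambda f=B_\lambda$ to a full open subset of $V$.
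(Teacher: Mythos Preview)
Your route diverges from the paper's in two respects, and the second creates a real gap.

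The paper does not work with the full two-parameter family $y=\lambda x+\mu$. It fixes two transversal pencils---horizontal lines $v=c$ and vertical lines $u=u_i$---and samples at $2d+1$ abscissas $u_0,\dots,u_{2d}$. The values $f(u_i,c)$ determine the degree-$\le d$ rational function $g_c(u)=f(u,c)$ uniquely; solving the (linear, after clearing denominators) interpolation system by Cramer's rule makes the coefficients of $g_c$ rational in the data $f(u_i,c)$. Since each vertical line $u=u_i$ is also in the family, each $f(u_i,c)$ is rational in $c$, hence so are the coefficients of $g_c$, hence $f(u,v)=g_v(u)$ is rational. This is your sampling idea, but executed with one fixed abscissa set rather than minors depending on $(\lambda,\mu)$; note in particular that the paper needs no smoothness hypothesis on $f$ beyond what is stated.

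More importantly, the paper does \emph{not} extract the degree bound from this construction. It first shows $f$ is rational of some degree, then argues separately: if $f=P/Q$ with $P,Q$ coprime and $\max(\deg P,\deg Q)=d'>d$, then for a Zariski-open set of lines the restrictions of $P,Q$ stay coprime of degree $d'$, contradicting the hypothesis. Your attempt to get degree $\le d$ directly, via the claim that the $q_k(\lambda,\mu)$ can be normalized to polynomials of degree $\le d-k$, is false. Take $f(x,y)=1/(xy)$ with $d=2$: on $y=\lambda x+\mu$ the denominator is $\lambda t^2+\mu t$, so $q_2=\lambda$, $q_1=\mu$, $q_0=0$, and no rescaling makes $q_2$ constant while keeping $q_1$ polynomial. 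What is true is that after substituting $\mu=y-\lambda x$ the sum collapses to $xy$, independent of $\lambda$; in general, with the ``right'' normalization $Q_{\lambda,\mu}(t)=Q(t,\lambda t+\mu)$ one gets $A_\lambda(x,y)=Q(x,y)$ identically. But this is a cancellation, not a termwise bound, and it is invisible from your Wronskian or sampling minors before one already knows $f=P/Q$. Your dual-variable argument does not yield the termwise bound either: the individual minors are rational of degree growing with the number of samples, not $\le d-k$. So keep your argument for rationality if you like (it works, and once you notice that $f(t_m,\lambda t_m+\mu)$ depends only on the single variable $\lambda t_m+\mu$ it essentially reduces to the paper's), but replace the degree control by the paper's a-posteriori restriction argument.
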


\begin{proof}
 It suffices to assume that $V\subset\R^2$.
 Let $\Uc$ be the open set of lines such that, for every $L\in\Uc$,
 we have $V\cap L\ne\0$, and $f|_{V\cap L}$ is a rational function
 of degree $d$.
Let $u$ and $v$ be affine coordinates on $\R^2$.
We can assume that $\Uc$ contains horizontal lines $v=const$ passing through
all points of $V$, and vertical lines $u=const$ passing through all points of $V$
(we may need to pass from $V$ and $\Uc$ to smaller open subsets
and make an affine coordinate change to arrange this).
Choose $2d+1$ different real numbers $u_0$, $\dots$, $u_{2d}$.
Note that, if $\phi(u)$ and $\psi(u)$ are degree $\le d$ rational functions of
$u$ such that
$\phi(u_i)=\psi(u_i)$ for all $i=0,\dots,2d$, then $\phi=\psi$.
This follows from the fact that a polynomial of degree at most $2d$ vanishing
at $2d+1$ different points must vanish identically.

Consider a rational function $g_c$ of degree $\le d$ such that
$g_c(u_i)=f(u_i,c)$ for all $i=0,\dots,2d$.
Such a rational function exists and coincides with the function
$u\mapsto f(u,c)$ by the remark just made.
On the other hand, the system of equations $g_c(u_i)=f(u_i,c)$ can be solved
for the coefficients of $g_c$.
Note that, after we multiply both parts of the equation $g_c(u_i)=f(u_i,c)$
by the denominator of $g_c$ evaluated at $u_i$, we obtain a linear equation.
Thus the system of equations $g_c(u_i)=f(u_i,c)$ is equivalent to some overdetermined
linear system.
At least locally, the solution of this system (suitably normalized --- note that
all equations are homogeneous, and the rational function does not change when we
multiply all its coefficients by the same number) is given by a rational function
of $u_i$ and $f(u_i,c)$.
This follows from the Cramer rule.
What is important for us is that the coefficients of
the rational function $g_c$ depend rationally on $c$!
It follows that $f$ is a rational function since $f(u,v)=g_v(u)$.

It remains to prove that the degree of $f$ is at most $d$.
Suppose that $f$ is a rational function of degree $d'>d$ represented as
a ratio of two relatively prime polynomials, at least one of which has degree $d'$.
Then the restrictions of this polynomial to all lines in some Zariski open set of lines
are polynomials of degree $d'$.
Moreover, the restrictions of two relatively prime polynomials to any
line in some Zariski open set of lines are relatively prime.
Thus the restrictions of $f$ to all lines in some Zariski open set of lines
will be rational functions of degree $d'$, a contradiction.
\end{proof}

Propositions \ref{P:cubic} and \ref{p:cubic-on-lines} imply the following

\begin{thm}
\label{T:dual-is-rational}
  Suppose that $F:U\to\RP^n$ is a planarization
  that is nondegenerate at some point.
  Then the dual planarization $\hat F:\hat U\to\RP^{n*}$ is
  rational of degree at most $\frac{n(n-1)}2$
  (possibly after restriction to some open subset of $\hat U$).
\end{thm}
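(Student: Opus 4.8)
The plan is to combine the two preceding propositions, using that nondegeneracy is an open condition. First I would invoke the remark already made in the text: since $F$ is nondegenerate at one point, it is nondegenerate at every point of some open set $W\subset U$. For each $a\in W$, Proposition \ref{P:cubic} tells us that $\hat F$ restricted to $L_a\cap\hat U$ is a rational map of degree at most $d:=\frac{n(n-1)}2$; in fact the proof of that proposition exhibits it, in homogeneous coordinates on $\RP^{n*}$, as the class of the $n$-vector $\Omega(\lambda)$, which is polynomial of degree $\le d$ in the slope parameter $\lambda$ running over $L_a$.

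Next I would observe that as $a$ ranges over $W$, the dual lines $L_a$ sweep out an open set of lines in $\RP^{2*}$, and each such line meets $\hat U$ (indeed in a cofinite subset, since $\Omega(\lambda)\ne 0$ for all but finitely many $\lambda$). Thus $\hat F$ is a map defined on the open set $\hat U\subset\RP^{2*}$ whose restriction to an open set of lines is rational of degree $\le d$. To feed this into Proposition \ref{p:cubic-on-lines}, which is a scalar statement, I would fix affine charts on $\RP^{2*}$ (the source) and on $\RP^{n*}$ (the target) and treat the $n$ affine coordinate functions of $\hat F$ separately. For each coordinate I restrict to the open subset of lines on which the degree of its one-variable restriction attains its maximal, hence generic, value $d_k\le d$, and apply Proposition \ref{p:cubic-on-lines}. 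The conclusion is that, after shrinking, every coordinate function of $\hat F$ is a rational function on $\RP^{2*}$, so $\hat F$ is a rational map.

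Finally I would pin down the degree, and here the cleanest route avoids worrying about common denominators of the separate coordinates. Write $\hat F=[G_0:\dots:G_n]$ with the $G_i$ homogeneous of a common degree $D$ and no common factor, and reuse the last paragraph of the proof of Proposition \ref{p:cubic-on-lines}: restrictions of relatively prime forms to a generic line stay relatively prime and keep their degree. Hence the degree of $\hat F|_{L_a}$ equals $D$ for generic $a\in W$ (such $L_a$ being a generic line). But Proposition \ref{P:cubic} bounds that restriction's degree by $d$, so $D\le d$, which is exactly the assertion.

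The main obstacle I expect is the bookkeeping in the middle step: Proposition \ref{p:cubic-on-lines} is stated for scalar functions and a fixed degree, whereas $\hat F$ is projective-valued and, a priori, the degree of its restriction could drop on subvarieties of the family of lines. Passing to charts, isolating each coordinate, and restricting to the open set of lines carrying the generic degree are the steps that need care. Once rationality is established, the degree bound in the last step is clean, because it is read off from a single generic line rather than reassembled from the individual coordinates.
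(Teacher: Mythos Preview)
Your proposal is correct and follows essentially the same route as the paper, which simply records the theorem as an immediate consequence of Propositions \ref{P:cubic} and \ref{p:cubic-on-lines}: pass to scalar components of $\hat F$ via linear functionals (the paper's suppressed argument uses arbitrary fractional linear functions $l:\RP^{n*}\dashrightarrow\RP^1$, you use affine coordinates), apply Proposition \ref{p:cubic-on-lines} to each, and read off the degree bound from the behavior on a generic line. Your final paragraph handling the degree bound via coprime homogeneous components on a generic line is in fact more careful than the paper's sketch, which leaves that step implicit.
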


\begin{comment}
\begin{proof}
  We know that there is an open set of lines in $\RP^{2*}$ intersecting the
  set $\hat U$ such that
  the restriction of $\hat F$ to any of these lines is rational of degree
  $\frac{n(n-1)}2$.
  It follows that, on these lines, $l\circ\hat F$ is a rational function of degree at
  most $\frac{n(n-1)}2$, for every fractional linear function
  $l:\RP^n\dashrightarrow\RP^1$.
  By Proposition \ref{p:cubic-on-lines}, it follows that $l\circ \hat F$
  is a rational function of degree at most $\frac{n(n-1)}2$ (on an open subset
  of $\hat U$ that is independent of $l$).
  Since $l$ is an arbitrary fractional linear function, it follows that
  $\hat F$ is a rational map of degree at most $\frac{n(n-1)}2$ on some open subset
  of $\hat U$.
\end{proof}
\end{comment}

We now discuss the case $n=3$.

\begin{prop}
\label{P:degen}
  Let $U\subset\R^2$ be an open subset.
  Consider a planarization $F:U\to\R^3$.
  If there is no point of $U$, at which $F$ is nondegenerate,
  then the planarization $F$ is trivial, possibly after restriction to a
  smaller open set.
\end{prop}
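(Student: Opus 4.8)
The plan is to show that the hypothesis forces $F$ to take every line through a generic point to a straight line in $\R^3$, and then to read off that the image lies in a plane. First I would specialize the proof of Proposition~\ref{P:cubic} to $n=3$: along a line of slope $\lambda$ the $2$-jet is the curve $u\mapsto B_0+uB_1(\lambda)+u^2B_2(\lambda)$, with $B_1(\lambda)=A_{1,0}+\lambda A_{0,1}$ and $B_2(\lambda)=A_{2,0}+\lambda A_{1,1}+\lambda^2 A_{0,2}$. Its affine hull is $B_0+\span(B_1(\lambda),B_2(\lambda))$, so there is a unique containing hyperplane exactly when $B_1(\lambda)$ and $B_2(\lambda)$ are linearly independent, i.e.\ when $B_1(\lambda)\wedge B_2(\lambda)\ne 0$ (equivalently $\Omega(\lambda)\ne 0$ in the notation of that proof). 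Recognizing $B_1(\lambda)=D_wF(a)$ and $2B_2(\lambda)=D_w^2F(a)$ for the direction $w=(1,\lambda)$, and treating vertical lines via the top-degree coefficient in $\lambda$, the assumption that $F$ is nowhere nondegenerate becomes the pointwise identity $D_wF(a)\wedge D_w^2F(a)=0$ for every $a\in U$ and every direction $w$.

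Next I would convert this infinitesimal condition into an honest lines-to-lines statement, but only after restricting $U$ so that $dF$ has locally constant rank. On the locus where $F$ is an immersion (rank $2$) one has $D_wF\ne 0$ for every $w\ne 0$; hence along any line $t\mapsto a_0+tw$ the curve $\gamma(t)=F(a_0+tw)$ satisfies $\gamma'\wedge\gamma''\equiv 0$ with $\gamma'$ nonvanishing, so $\gamma''$ is a scalar multiple of $\gamma'$, the direction of $\gamma'$ is constant, and $\gamma$ traces a straight segment. Thus every line through an immersion point maps onto a genuine straight line.

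Finally I would run a pencil argument. Fix an immersion point $a_0$, set $p_0=F(a_0)$ and $T_0=\span(F_u(a_0),F_v(a_0))$. For each direction $w$ the image of the line through $a_0$ is a straight line through $p_0$ with direction $D_wF(a_0)\in T_0$, hence contained in the fixed plane $p_0+T_0$. As $w$ and $t$ vary the points $a_0+tw$ fill a neighborhood of $a_0$, so $F$ maps that neighborhood into $p_0+T_0$; that is, $F$ is trivial there. The remaining constant-rank strata are easy: rank $0$ gives a constant map, while on the rank $1$ locus the image is an immersed curve containing a straight subarc through each of its points (the image of a line along a non-kernel direction), whence it is a straight segment and lies in a plane.

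The main obstacle is precisely this passage from the infinitesimal identity $D_wF\wedge D_w^2F\equiv 0$ to a bona fide lines-to-lines property. One must avoid points where directional derivatives degenerate, since a $C^2$ curve with $\gamma'\wedge\gamma''\equiv 0$ can still bend at an isolated zero of $\gamma'$; this is exactly what forces the reduction to the constant-rank strata before the geometry can be read off. (Alternatively, once lines-to-lines is established on the immersion locus, one could invoke the collineation theorem quoted in the introduction to conclude that $F$ there is a restriction of a projective embedding, whose image is a hyperplane.)
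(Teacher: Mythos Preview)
Your argument is correct and shares the same skeleton as the paper's proof: translate ``degenerate everywhere'' into the pointwise identity $D_wF\wedge D_w^2F\equiv 0$, upgrade this (where $D_wF\ne 0$) to the statement that line segments map to line segments, and conclude planarity. The differences are in the two auxiliary choices. First, the paper does not stratify by the rank of $dF$: it disposes of $dF\equiv 0$ separately and otherwise just picks an open product $U'\times V'$ of points and directions on which $d_aF(\xi)\ne 0$, running the curvature argument only for directions $\xi\in V'$. Second, having obtained a lines-to-lines map for an open set of lines, the paper invokes the M\"obius collineation theorem (either the image lies in a line, or $F$ is a projective embedding into $\RP^3$, whose image is a plane), rather than your direct pencil-in-the-tangent-plane argument. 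Your version is a bit more self-contained---it avoids citing M\"obius and handles the rank-$1$ case explicitly---while the paper's route is shorter because the M\"obius theorem absorbs all the sub-cases at once. You already note this alternative in your final paragraph, so you have essentially found both proofs.
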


\begin{proof}
If the differential $dF$ of $F$ vanishes everywhere on $U$, then
$F$ is locally constant on $U$,
in particular, the restriction of $F$ to any connected component
of $U$ is a trivial planarization.
Suppose that $dF\ne 0$ at least at one point.
Then there is an open set $U'\subset U$ of points and an open set
$V'$ of vectors in $\R^2$ such that $d_a F(\xi)\ne 0$ for every $a\in U'$
and $\xi\in V'$.

Since the map $F$ is degenerate at all points of $U'$, we have
$d_aF(\xi)\wedge d^2_aF(\xi,\xi)=0$ for all points $a\in U'$ and all vectors
$\xi\in V'$.
For $a\in\R^2$ and $\xi\in V'$, we set $X(t)=F(a+t\xi)$.
We have $\dot X(t)\ne 0$ whenever $a+t\xi\in U'$.
On the other hand, $\ddot X\wedge \dot X=0$.
It follows that the $X$-image of any interval $(t_1,t_2)$ such
that $a+t\xi\in U'$ for all $t\in (t_1,t_2)$ lies in a line.
Hence, if $U''$ is any convex open subset of $U'$, then $F(U''\cap L)$
lies in a line for every line $L$ of the form $\{a+t\xi \,|\, t\in\R\}$,
where $\xi\in V'$.
The set of such lines is an open set.
It now follows by the M\"obius theorem that, for
some open subset $U'''\subset U$, either the set $F(U''')$
is a subset of a line, or $F|_{U'''}$ is a projective embedding.
In both cases, $F$ is trivial on $U'''$.
\end{proof}

\begin{proof}[Proof of Theorem \ref{t:2-plan}]
  If there is no point, at which $F$ is nondegenerate, then
  by Proposition \ref{P:degen}, there is an open subset of $U$, on
  which $F$ is a trivial planarization.
  Suppose now that $F$ is nondegenerate somewhere.
  Then the dual planarization $\hat F$ is defined on some open subset
  $\hat U\subset\RP^{2*}$.
  If there is no point, at which $\hat F$ is nondegenerate,
  then, again by Proposition \ref{P:degen}, there is an open subset
  of $\hat U$, on which $\hat F$ is a trivial planarization.
  It follows that $F$ is a co-trivial planarization (on an open
  subset of $U$).
  Finally, suppose that $\hat F$ is nondegenerate at some point.
  Then the restriction of $F$
  to some open subset of $U$ coincides with the dual of $\hat F$,
  hence, by Theorem \ref{T:dual-is-rational}, it is a rational map of degree at most three.
\end{proof}

Let us now discuss some immediate applications of Theorem \ref{t:2-plan}.
The following result is essentially equivalent to the theorem of Khovanskii
\cite{Kh}:

\begin{thm}
\label{t:khov}
  Let $U$ be an open subset of $\RP^2$, and $f:U\to S^2$ a sufficiently
  smooth map such that the image $f(U\cap L)$ lies in a circle for
  every line $L\subset\RP^2$.
  Then, possibly after restriction to a smaller open set,
  one of the following holds:
\begin{enumerate}
\item
the image of $f$ lies in a circle;
\item
the map $f$ is a co-trivial planarization;
\item
the map $f$ is a quadratic rational map.
\end{enumerate}
\end{thm}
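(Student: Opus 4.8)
The plan is to reduce Theorem \ref{t:khov} to Theorem \ref{t:2-plan} by regarding $f$ as a planarization with values in $\RP^3$. Fix the standard embedding $S^2\subset\R^3\subset\RP^3$, where $\R^3$ is an affine chart of $\RP^3$, and set $F=\iota\circ f$ with $\iota:S^2\hookrightarrow\RP^3$ the inclusion. Every circle on $S^2$ is the intersection of $S^2$ with an affine plane in $\R^3$, and the projective closure of such a plane is a hyperplane in $\RP^3$. Hence for every line $L\subset\RP^2$ the set $F(U\cap L)$, lying in a circle, is contained in a hyperplane; in particular this holds for an open set of lines, so $F$ is a planarization. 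Working in an affine chart of $\RP^2$ and using that $f$ is sufficiently smooth, I would apply Theorem \ref{t:2-plan} to $F$: after passing to an open subset $U'\subset U$, the map $F|_{U'}$ is trivial, co-trivial, or rational of degree at most $3$.

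Next I would translate the three alternatives back to statements about $f$. If $F|_{U'}$ is trivial, then $F(U')$ lies in some hyperplane $\Pi\subset\RP^3$, so $f(U')\subset S^2\cap\Pi$. The intersection of the sphere with a plane is a circle (possibly a single point), so $f(U')$ lies in a circle and we are in alternative (1). If $F|_{U'}$ is co-trivial, then, since $F$ and $f$ differ only by the fixed inclusion $\iota$ into an affine chart, $f|_{U'}$ is itself a co-trivial planarization, which is alternative (2).

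The remaining case, where $F|_{U'}$ is rational of degree at most $3$, is where the real work lies, and the point is to improve the bound from $3$ to $2$. Here I would exploit that the image of $F$ lies in the quadric $S^2$ and that circles are irreducible conics. For a generic line $L$, the image $F(U\cap L)$ lies in a circle $C_L$, an irreducible conic in $\RP^3$, so the restriction $F|_L$ factors as $L\cong\RP^1\to C_L\hookrightarrow\RP^3$, where $C_L\cong\RP^1$ is embedded by the complete linear system of degree $2$. Pulling back a hyperplane section of $C_L$ (a divisor of degree $2$) shows that the degree of $F|_L$ equals $2k$, where $k$ is the degree of the parameterization $\RP^1\to C_L$; in particular this degree is even. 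Since a generic line avoids the finite base locus of $F$, the degree of $F|_L$ equals the degree of $F$, which is therefore even and at most $3$, hence at most $2$. Alternatively, one can reach the same conclusion by applying Proposition \ref{p:cubic-on-lines} to the affine coordinate functions of $F$, exactly as in the proof of Theorem \ref{T:dual-is-rational}. If the degree is $2$, then $f$ is a quadratic rational map, which is alternative (3); if the degree is at most $1$, then $F(U')$ lies in a plane and we fall back into alternative (1). The main obstacle is this parity argument ruling out the genuine cubic behaviour permitted by Theorem \ref{t:2-plan}: it is precisely the constraint that circles are degree-$2$ curves on the quadric $S^2$, special to circles, that forces a quadratic rather than merely cubic conclusion.
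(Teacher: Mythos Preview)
Your proof is correct and follows the same overall architecture as the paper: regard $f$ as a planarization into $\RP^3$ (circles being plane sections of $S^2$), invoke Theorem \ref{t:2-plan}, and then in the rational case upgrade ``degree $\le 3$'' to ``degree $\le 2$'' using that images of lines lie in conics. The only difference is in that last upgrade. The paper isolates it as Lemma \ref{l:cub-qua} (a degree $\le 3$ map $\CP^1\to\CP^n$ whose image lies in an irreducible conic is quadratic), proved via the factorization lemma for cubics satisfying $PQ=R^2$, and then passes to $\CP^2$ via Lemma \ref{l:cub-qua1} and Proposition \ref{p:cubic-on-lines}. You instead give a direct parity argument: since an irreducible conic sits in $\RP^3$ with degree $2$, any map $\RP^1\to\RP^3$ factoring through it has degree $2k$, hence cannot be $3$. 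This is a clean alternative proof of Lemma \ref{l:cub-qua}, and arguably more transparent; the paper's factorization lemma buys nothing extra here. Your remark that the degree of $F|_L$ equals $\deg F$ for generic $L$ makes the passage to $\CP^2$ immediate, so you do not actually need the ``alternative'' appeal to Proposition \ref{p:cubic-on-lines}.
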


Note that circles in $S^2$ are precisely intersections of $S^2$ with
2-dimensional planes.
Using the methods developed above, we give a new proof of this theorem.
The following lemmas are left as (simple) exercises:
\begin{comment}
We need two lemmas (they are not new but simple enough to be given with proofs):
\end{comment}

\begin{lem}
  Suppose that $P$, $Q$ and $R$ are cubic polynomials in one variable $t$
  with complex coefficients such that $PQ=R^2$.
  Then $P$, $Q$ and $R$ have a nonconstant common divisor in $\C[t]$.
\end{lem}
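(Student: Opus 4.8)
The plan is to exploit unique factorization in $\C[t]$ together with a parity argument on degrees. Since $\C$ is algebraically closed, a nonconstant common divisor of $P$, $Q$, $R$ exists if and only if these three polynomials share a common root. Moreover, any common root $r$ of $P$ and $Q$ is automatically a root of $R$: from $PQ=R^2$ we get $R(r)^2=P(r)Q(r)=0$, hence $R(r)=0$. So I would first reduce the statement to showing that $P$ and $Q$ have a common root, equivalently that $\deg\gcd(P,Q)\ge 1$.

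I would then argue by contradiction, assuming $P$ and $Q$ are coprime. Working in the UFD $\C[t]$, I compare the factorizations of the two sides of $PQ=R^2$. For every irreducible $\pi$, let $v_\pi(\cdot)$ denote the multiplicity with which $\pi$ divides a polynomial. On the right-hand side this multiplicity is $2v_\pi(R)$, hence even, while on the left-hand side it equals $v_\pi(P)+v_\pi(Q)$. Since $P$ and $Q$ are assumed coprime, at most one of $v_\pi(P)$ and $v_\pi(Q)$ is positive, so each of them is individually even. Consequently both $P$ and $Q$ are perfect squares in $\C[t]$, up to a constant factor. This is the decisive point, and it immediately yields the contradiction: a perfect square has even degree, whereas $P$ is cubic, so $\deg P=3$ is odd. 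Hence $P$ and $Q$ cannot be coprime; they share a root, which by the observation above is also a root of $R$, and the corresponding linear factor $(t-r)$ is the desired nonconstant common divisor.

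I expect no genuine obstacle here: the entire content is the elementary incompatibility between the odd degree $3$ and the fact that coprimality would force $P$ to be a square. The only points deserving a little care are the reduction from ``common divisor'' to ``common root'' (which uses that $\C$ is algebraically closed) and the remark that a common zero of $P$ and $Q$ is necessarily a zero of $R$; both are routine.
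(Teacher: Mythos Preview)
Your argument is correct and is essentially the same parity idea the paper uses: both exploit that in the UFD $\C[t]$ the irreducible factors of $R^2$ occur with even multiplicity, which is incompatible with $\deg P=3$ once $P$ and $Q$ are assumed coprime. The paper carries this out by hand, tracking a single linear factor $P_1\mid P$ and using $\deg P=3$ to force a second linear factor into $Q$, whereas you invoke the cleaner standard statement ``coprime factors of a square are squares'' to reach the same contradiction; the content is identical.
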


\begin{comment}
\begin{proof}
  Let $P_1$ be any linear factor of $P$ over $\C$.
  Then $R$ is divisible by $P_1$, hence $R^2$ is divisible by $P_1^2$.
  If $Q$ is divisible by $P_1$, then we are done.
  Suppose not.
  Then $P$ is divisible by $P_1^2$ so that $P=P_1^2P_2$, where $P_2$
  is some linear factor.
  Since $R^2$ is divisible by $P_2^2$, the polynomial $Q$ must be
  divisible by $P_2$.
\end{proof}
\end{comment}

\begin{lem}
\label{l:cub-qua}
  Any rational map of $\CP^1$ to $\CP^n$
  of degree at most three,
  whose image lies in an irreducible conic,
  is in fact a quadratic rational map.
\end{lem}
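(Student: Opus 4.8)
The plan is to reduce the problem to a planar one and then exploit the fact that a parameterized curve lying on an irreducible conic must have even degree. First I would use that an irreducible conic $C\subset\CP^n$ spans a unique plane $\Pi\cong\CP^2$, and that the entire image of the map lies in $\Pi$. After a linear change of homogeneous coordinates on $\CP^n$ (which alters neither the degree of the map nor the fact that its defining forms have no common factor) I may assume $\Pi=\{x_3=\dots=x_n=0\}$ and that $C$ is the standard conic $x_0x_2=x_1^2$ inside $\Pi$. It then suffices to treat a nonconstant map $Q=[P_0:P_1:P_2]:\CP^1\dashrightarrow\CP^2$ with $P_0,P_1,P_2$ homogeneous of degree $\le 3$, without common factor, satisfying $P_0P_2=P_1^2$.

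The cleanest route is through the Veronese parameterization $\nu:\CP^1\to C$, $[s:t]\mapsto[s^2:st:t^2]$, which is an isomorphism because $C$ is irreducible, hence smooth. Since $Q$ is nonconstant, its image is all of $C$, so $q:=\nu^{-1}\circ Q:\CP^1\to\CP^1$ is a morphism; writing $q=[a:b]$ with $a,b$ homogeneous of the same degree $k\ge 1$ and $\gcd(a,b)=1$, substitution gives $Q=[a^2:ab:b^2:0:\dots:0]$. As $\gcd(a^2,ab,b^2)=1$, the degree of $Q$ is exactly $2k$, and $2k\le 3$ forces $k=1$, so $\deg Q=2$.

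Alternatively, and more in the spirit of the preceding lemma, I would rule out $\deg Q=3$ directly. The relation $P_0P_2=P_1^2$ is precisely the hypothesis of that lemma, so when the three forms are cubics they share a nonconstant factor, contradicting that they are reduced. The only bookkeeping is to check the admissible degree distributions compatible with $\deg P_0+\deg P_2=2\deg P_1$ and $\max_i\deg P_i=3$, namely $(3,3,3)$ and $(3,2,1)$ up to the symmetry $P_0\leftrightarrow P_2$; in the unequal case one observes that the linear factor of the degree-$1$ form divides $P_1^2$, hence divides $P_1$, and the common factor appears at once.

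Either way the conclusion is $\deg Q\le 2$. Finally I would discard the low-degree possibilities: the constant case is excluded by the standing assumption that $Q$ be nonconstant, and a degree-$1$ map is impossible because its image is a projective line, whereas an irreducible conic contains no line. Hence $\deg Q=2$ and $Q$ is a quadratic rational map, as claimed. The only genuinely delicate point in either approach is the degree count — verifying that passing to the plane and composing with $\nu$ introduces no hidden common factor — but this is routine once the reduction is set up.
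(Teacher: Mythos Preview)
Your proposal is correct, and in fact your second route---reducing to the plane, putting the conic in the normal form $x_0x_2=x_1^2$, and invoking the preceding lemma on cubics $P,Q,R$ with $PQ=R^2$---is exactly the paper's argument (the paper's commented-out proof is two lines: reduce to $n=2$, normalize the conic, apply the previous lemma). You are more careful than the paper in tracking the possible degree distributions and in explicitly ruling out degrees $0$ and $1$; the paper leaves these as tacit. Your first route via the Veronese isomorphism $\nu:\CP^1\to C$ is a genuinely different and somewhat cleaner alternative: it replaces the ad hoc factorization argument of the preceding lemma with the structural fact that any map into $C$ factors through $\nu$, forcing the degree to be even. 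The algebraic route buys you a self-contained proof depending only on elementary divisibility, while the Veronese route buys conceptual clarity and generalizes immediately (a degree-$\le d$ map into an irreducible conic has degree exactly $2\lfloor d/2\rfloor$ or less).
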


\begin{comment}
\begin{proof}
A conic is a plane curve, hence we can assume that $n=2$.
  An irreducible conic is given by the equation
  $x_1x_2=x_0^2$ in some homogeneous coordinates $[x_0:x_1:x_2]$.
  The result now follows from the preceding lemma.
\end{proof}
\end{comment}

Note that Lemma \ref{l:cub-qua} applies also to rational maps
of $\RP^1$ to $\RP^n$ since any such map can be complexified.

\begin{lem}
  \label{l:cub-qua1}
  Consider a rational map $F:\CP^2\dashrightarrow\CP^n$ of degree at most three
  such that, for a Zariski dense set of lines $L\subset\CP^2$,
  the image $F(L)$ lies in a plane algebraic curve of degree at most two.
  Then $F$ is a rational map of degree at most two, unless $F(\CP^2)$
  is contained in a line.
\end{lem}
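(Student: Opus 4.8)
The plan is to reduce to the image variety, using Lemma \ref{l:cub-qua} to control the restrictions of $F$ to lines. First I would dispose of the trivial case: if the actual degree of $F$ is at most two there is nothing to prove, so I assume $F$ is given by cubic forms $F_0,\dots,F_n$ with no common factor, i.e. $F$ is genuinely of degree three. The points of indeterminacy of $F$ (the common zeros of the $F_i$) form a finite set, so for every line $L$ in a Zariski dense set the restrictions $F_i|_L$ have no common factor; hence $F|_L:\CP^1\dashrightarrow\CP^n$ is genuinely a cubic rational map.

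Next I would identify the image of each such line. Since $F(L)$ is the image of $\CP^1$ under $F|_L$, it is an irreducible curve, and by hypothesis it lies in a plane curve $C$ of degree at most two. If $C$ were an irreducible conic, then Lemma \ref{l:cub-qua} would force $F|_L$ to be a quadratic map, contradicting that it is genuinely cubic. Therefore $C$ is reducible, a union of lines, and the irreducible curve $F(L)$ must lie in one of them. Thus for every line $L$ in a Zariski dense set, $F(L)$ is contained in a line of $\CP^n$.

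It then remains to analyze the irreducible image variety $S=\overline{F(\CP^2)}$. If $\dim S=0$ the image is a point, and if $\dim S=1$ then $S$ is an irreducible curve containing the line $F(L)$, hence $S$ itself is a line; in both cases $F(\CP^2)$ lies in a line and we are done. The remaining, and main, case is $\dim S=2$. Here $F:\CP^2\dashrightarrow S$ is generically finite, so a general point $q\in S$ has a nonempty finite fiber; choosing a general preimage $p$, the pencil of lines through $p$ maps to a family of lines through $q$ lying on $S$. This family cannot be constant, for otherwise $F$ would map a neighborhood of $p$, and hence all of $\CP^2$, into a single line, contradicting $\dim S=2$. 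Consequently infinitely many lines of $S$ pass through the general point $q$, and a classical fact then forces $S$ to be a plane. Reembedding, $F$ becomes a dominant rational self-map of $\CP^2$ carrying the general line to a line, which is necessarily a projective transformation and hence of degree one --- contradicting that $F$ is genuinely cubic. So $\dim S=2$ cannot occur, and $F(\CP^2)$ lies in a line.

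The crux of the argument is this last case. The two facts I would invoke --- that an irreducible surface through whose general point pass infinitely many of its own lines must be a plane, and that a dominant rational self-map of $\CP^2$ sending the general line to a line is linear --- are classical, but they are exactly the step where the degree-three hypothesis is converted into a contradiction. Everything preceding it is a straightforward combination of base-point counting and Lemma \ref{l:cub-qua}, so I expect the surface case to be where the real work (or the appeal to standard projective geometry) lies.
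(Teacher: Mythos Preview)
Your argument is correct, and its overall shape---split into ``some $F(L)$ lies in an irreducible conic'' versus ``every $F(L)$ lies in a line''---matches the paper's. The execution differs in both branches, though. In the conic branch, the paper argues positively: once one $F(L)$ lies in an irreducible conic, a Zariski open set of lines has this property, Lemma~\ref{l:cub-qua} forces $F|_L$ to have degree at most two, and then Proposition~\ref{p:cubic-on-lines} (a rational map whose restriction to a generic line has degree $\le d$ is itself of degree $\le d$) upgrades this to $\deg F\le 2$. You instead assume $\deg F=3$ from the start and use Lemma~\ref{l:cub-qua} to derive an immediate contradiction on each line, which is slicker and bypasses Proposition~\ref{p:cubic-on-lines} entirely.

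In the line branch, the paper simply invokes the M\"obius theorem for $\CP^2\dashrightarrow\CP^n$ to conclude that $F$ is projective or has image in a line. Your route---pass to the image surface $S$, show that through a general point of $S$ there is a $1$-parameter family of lines, deduce $S$ is a plane, and then apply M\"obius for $\CP^2\dashrightarrow\CP^2$---is valid but longer: the last step is the same M\"obius statement the paper uses directly, so the surface analysis is extra scaffolding. One small point to tighten: the set of lines $L$ with $F(L)$ contained in a line is Zariski closed (it is a rank condition on the coefficient vectors of $F|_L$), so once it is dense it is all of $\CP^{2*}$; making this explicit ensures that the pencil through a general $p$ really does consist of good lines. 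With that said, both approaches reach the same endpoint; yours trades one cited proposition for a bit more projective geometry.
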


\begin{proof}
 Suppose that, for at least one line $L\subset\CP^2$ (hence for
 a Zariski open and dense set of lines), the set $F(L)$ lies in an irreducible conic.
 The map $F$ restricted to any such line must be a rational map of
 degree at most 2 by Lemma \ref{l:cub-qua}.
 Hence by Proposition \ref{p:cubic-on-lines} (more precisely, an obvious
 complex analog of it and Theorem \ref{T:dual-is-rational}), the map $F$
 is a rational map of degree at most two.
 If, for every $L$, the set $F(L)$ lies in a line, then by the M\"obius theorem,
 $F$ is a projective transformation, i.e. a rational map of degree 1 (unless
 $F(\CP^2)$ is contained in a line).
\end{proof}

\begin{proof}[Proof of Theorem \ref{t:khov}]
If the planarization $f$ is trivial, then we have case $(1)$ of the theorem.
Suppose that $f$ is not trivial.
By Theorem \ref{t:2-plan}, it can be either co-trivial or rational of degree at most 3.
In the latter case, by Lemma \ref{l:cub-qua1},
the map $f$ is a rational map of degree at most two.
If $f$ were a projective map, then $f$ would be a trivial planarization.
Therefore, $f$ is a quadratic rational map.
\end{proof}

\section{Linear webs of conics}

Consider a linear system $\Lc$ of conics in $\RP^2$.
The linear system $\Lc$ defines a quadratic rational map $\Phi_\Lc:\RP^2\to\RP^n$,
where $n$ is the dimension of $\Lc$.
Fix a system $[\lambda_0:\dots:\lambda_n]$ of homogeneous coordinates in $\Lc$,
then the conic with homogeneous coordinates $[\lambda_0:\dots:\lambda_n]$
is given by the equation
$$
\lambda_0\phi_0+\dots+\lambda_n\phi_n=0,
$$
where $\phi_0$, $\dots$, $\phi_n$ are fixed
linearly independent homogeneous quadratic polynomials in the
homogeneous coordinates on $\RP^2$.
The map $\Phi_\Lc$ is defined by the equation
$$
\Phi_\Lc[x_0:x_1:x_2]=[\phi_0[x_0:x_1:x_2]:\dots:\phi_n[x_0:x_1:x_2]].
$$

Consider first the case $n=2$, i.e. a linear net of conics.

\begin{prop}
\label{p:linsys2}
  Suppose that $U\subset\RP^2$ is an open set, and $f:U\to\RP^2$ is
  a sufficiently smooth map that takes lines to $\Lc$-curves for a linear
  net $\Lc$ of conics.
  Then either the image of $f$ lies in a conic from $\Lc$,
  or $f$ is a local inverse of some quadratic rational map
  (namely, of the map $\Phi_\Lc$ post-composed with some M\"obius
  transformation), possibly after restriction to a smaller open set.
\end{prop}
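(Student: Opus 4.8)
The plan is to reduce the proposition to the M\"obius theorem quoted in the introduction, by composing $f$ with the quadratic map $\Phi_\Lc$. The point is that for a net the dimension is $n=2$, so $\Phi_\Lc$ takes values in $\RP^2$, where hyperplanes are exactly lines; hence a planarization into $\RP^2$ is the same thing as a collineation. First I would form $F=\Phi_\Lc\circ f$. If $f(U)$ happens to lie in the base locus $\{\phi_0=\phi_1=\phi_2=0\}$ of $\Phi_\Lc$, then in particular $f(U)\subset\{\phi_0=0\}$, which is a conic of $\Lc$, and the first alternative already holds; otherwise, after passing to a smaller open set, I may assume $\Phi_\Lc$ is defined and smooth along $f(U)$, so that $F:U\to\RP^2$ is a well-defined, sufficiently smooth map.

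Next I would check that $F$ is a collineation. Fix any line $L$ meeting $U$. By hypothesis $f(U\cap L)$ lies on a conic $\sum_i\lambda_i\phi_i=0$ of $\Lc$, which for a point $p\in U\cap L$ means precisely that the homogeneous coordinates $\phi_i(f(p))$ of $F(p)$ satisfy $\sum_i\lambda_i y_i=0$. Thus $F(U\cap L)$ lies on the line $\{\sum_i\lambda_i y_i=0\}\subset\RP^2$. As this holds for every line meeting $U$ --- in particular for an open set of lines --- the map $F$ is a continuous collineation, and the M\"obius theorem applies: after shrinking $U$, either $F(U)$ is contained in a single line of $\RP^2$, or $F$ is the restriction of a projective transformation $M:\RP^2\to\RP^2$.

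It remains to translate the two outcomes back to $f$. In the first case $F(U)\subset\{\sum_i\mu_i y_i=0\}$ for some fixed $[\mu_0:\mu_1:\mu_2]$, so $\sum_i\mu_i\phi_i(f(p))=0$ for all $p$, meaning $f(U)$ lies on the conic $\sum_i\mu_i\phi_i=0$ of $\Lc$; this is the first alternative. In the second case $\Phi_\Lc\circ f=M$ on $U$, and composing with $M^{-1}$ yields $(M^{-1}\circ\Phi_\Lc)\circ f=\mathrm{id}_U$. Since $M^{-1}$ is projective and $\Phi_\Lc$ is quadratic, $\Psi=M^{-1}\circ\Phi_\Lc$ is a quadratic rational map, and the relation $\Psi\circ f=\mathrm{id}$ exhibits $f$ as a local inverse of $\Psi$, i.e. of $\Phi_\Lc$ post-composed with the M\"obius transformation $M^{-1}$ --- the second alternative.

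I expect the difficulties here to be organizational rather than conceptual. One must arrange that the successive shrinkings of $U$ (to avoid the indeterminacy locus of $\Phi_\Lc$, and to invoke the M\"obius theorem) take place on a common open set, and one must use that a projective embedding of an open subset of $\RP^2$ into $\RP^2$ is the restriction of an honest element of the projective group, so that $M^{-1}$ exists. The essential step --- recognizing $F=\Phi_\Lc\circ f$ as a collineation and letting M\"obius do the work --- involves no computation at all.
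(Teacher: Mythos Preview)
Your proposal is correct and follows essentially the same approach as the paper: compose with $\Phi_\Lc$ to obtain a collineation $F=\Phi_\Lc\circ f:U\to\RP^2$, apply the M\"obius theorem, and read off the two alternatives. The paper's proof is terser---it omits your remarks on the base locus and on why projective embeddings into $\RP^2$ extend to elements of $\mathrm{PGL}$---but the argument is the same.
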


\begin{proof}
The map $\Phi_\Lc\circ f:U\to\RP^2$ takes lines to lines.
Therefore, by the theorem of M\"obius, the map $\Phi_\Lc\circ f$
is a restriction of a projective transformation $P:\RP^2\to\RP^2$
(unless the image of $\Phi_\Lc\circ f$ lies in a line, i.e. the
image of $f$ lies in a conic).
It follows that $f$ is a local branch of
$\Phi_\Lc^{-1}\circ P=(P^{-1}\circ\Phi_{\Lc})^{-1}$.
\end{proof}

We now proceed with the proof of Theorem \ref{t:linsys3}.
Let $\Lc$ be a linear web of conics defined by linearly
independent homogeneous quadratic polynomials $\phi_0$,
$\phi_1$, $\phi_2$ and $\phi_3$ in the homogeneous coordinates of $\RP^2$.
The map $f$ gives rise to a planarization $F=\Phi_\Lc\circ f:U\to\RP^3$.
\begin{comment}
Suppose that part $(1)$ of Theorem \ref{t:linsys3} does not
hold: there is no open subset $U'\subset U$ such that the set $f(U')$
is a subset of a conic from $\Lc$.
\end{comment}

\begin{comment}
\begin{lem}
  \label{l:basept}
  Suppose that there exists an irreducible conic $K\subset\CP^2$ such that
  $\Phi_\Lc(K)$ lies in an irreducible cubic curve.
  Then the linear web $\Lc$ has at least one base point in $K$.
\end{lem}

\begin{proof}
  Let $\nu:\CP^1\to K$ be a quadratic rational parameterization of $K$.
  The composition $g=\Phi_\Lc\circ\nu$ is a rational map of degree at most 4
  that takes $K$ to an irreducible cubic curve $C$.
  It follows that $g$ is actually a rational map of degree 3, by Lemma
  \ref{l:deg4-cub}.
  This means that the four forms $\phi_0\circ \nu$, $\phi_1\circ\nu$,
  $\phi_2\circ\nu$ and $\phi_3\circ\nu$ have a nonconstant linear factor,
  i.e. they vanish simultaneously at some point $a\in\CP^1$.
  It follows that the quadratic forms $\phi_i$ vanish simultaneously at $\nu(a)$.
  In this case, $\nu(a)$ is a base point of the linear web $\Lc$.
\end{proof}
\end{comment}

Let $S$ denote the Zariski closure in $\CP^3$ of the set $\Phi_\Lc(\CP^2)$.
This is an irreducible algebraic surface of degree at most 4
(if $S$ were a curve, then it would be a conic, but we know that $S$
is not a subset of a plane).
The estimate on the degree follows from the elimination theory.
If the degree of $S$ is two, then we have case $(4)$ of Theorem \ref{t:linsys3}, since
$F$ must be a quadratic rational map by Lemma \ref{l:cub-qua1},
and $f=\Phi_\Lc^{-1}\circ F$.

\begin{lem}
  \label{l:Phi-birat}
  Suppose that the degree of $S$ is at least 3.
  Then the map $\Phi_\Lc$ is a bi-rational isomorphism between $\RP^2$ and
  $\Phi_\Lc(\RP^2)$.
\end{lem}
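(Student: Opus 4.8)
The plan is to bound the product of two integers associated with $\Phi_\Lc$ --- the degree of the map and the degree of its image --- by a Bézout count on the web, and then read off birationality from an arithmetic inequality. Write $d$ for the degree of $\Phi_\Lc$ regarded as a generically finite map onto $S$, i.e. the number of preimages of a generic point of $S$ (this makes sense because $S$ is a surface, so $\Phi_\Lc$ is generically finite onto $S$). Recall that $\deg S$ equals the number of points in $S\cap H_1\cap H_2$ for two generic hyperplanes $H_1,H_2\subset\CP^3$.

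The key observation is that preimages of hyperplanes are conics of the web. Since the homogeneous components of $\Phi_\Lc$ are the quadratic forms $\phi_0,\dots,\phi_3$, the preimage $\Phi_\Lc^{-1}(H)$ of a generic hyperplane $H\subset\CP^3$ is exactly a conic $C_H\in\Lc$ (off the base locus of $\Lc$). Choosing two generic hyperplanes $H_1,H_2$, their preimages $C_1,C_2$ are two generic conics of the web. A point of $\CP^2$ off the base locus lies in $C_1\cap C_2$ precisely when it maps into $S\cap H_1\cap H_2$, and a generic point of $S\cap H_1\cap H_2$ has exactly $d$ preimages, so for generic choices
$$
d\cdot\deg S=\#\bigl((C_1\cap C_2)\setminus\{\text{base points of }\Lc\}\bigr).
$$
As $C_1$ and $C_2$ are conics, Bézout's theorem gives $\#(C_1\cap C_2)=4$ counted with multiplicity, whence the right-hand side is at most $4$ and therefore $d\cdot\deg S\le 4$.

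To conclude, I would use the hypothesis $\deg S\ge 3$ together with the fact that $d\ge 1$ is a positive integer: the inequality $d\cdot\deg S\le 4$ then forces $d=1$. A generically finite rational map of degree one onto its image is a birational isomorphism, so $\Phi_\Lc:\CP^2\dashrightarrow S$ is birational; since $\phi_0,\dots,\phi_3$ have real coefficients, the birational inverse is defined over $\R$ and restricts to a birational isomorphism between $\RP^2$ and $\Phi_\Lc(\RP^2)$.

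The step I expect to require the most care is the identity $d\cdot\deg S=\#\bigl((C_1\cap C_2)\setminus\{\text{base points}\}\bigr)$: one must choose $H_1,H_2$ generically enough that $C_1,C_2$ meet transversally away from the base locus and that the fibre of $\Phi_\Lc$ over a generic point of $S\cap H_1\cap H_2$ really has $d$ reduced points, so that no degenerate fibres or tangential intersections inflate the count. Invoking generic smoothness and the generic finiteness of $\Phi_\Lc$ onto the surface $S$ handles this; the remaining steps are elementary.
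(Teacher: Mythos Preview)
Your argument is correct and is a genuinely different route from the paper's. You use the projection/degree formula: the self-intersection of the linear system of conics is $4$, so $d\cdot\deg S\le 4$, and $\deg S\ge 3$ forces $d=1$. The paper instead argues geometrically with a single generic line $L\subset\CP^2$: its image $\Phi_\Lc(L)$ is an irreducible conic $C$ lying in a plane $P_L$; since $\deg S\ge 3$, the plane section $P_L\cap S$ strictly contains $C$, so $\Phi_\Lc^{-1}(P_L)$ is a reducible conic $L\cup L'$ with $L'\ne L$; each of $L,L'$ maps birationally onto its own irreducible conic, and these two conics are distinct (together they account for $P_L\cap S$), whence a generic point of $C$ has a unique preimage and $d=1$. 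Your approach is shorter and more conceptual, and as a bonus the same inequality $d\cdot\deg S\le 4$ immediately recovers the bound $\deg S\le 4$ stated earlier in the paper. The paper's approach is more hands-on and avoids invoking the degree formula or generic-smoothness statements, at the cost of a slightly more delicate geometric analysis of plane sections. Your caveat about choosing $H_1,H_2$ generically (so that $C_1,C_2$ share no component, meet transversally, and the fibres over $S\cap H_1\cap H_2$ are reduced of size $d$) is exactly right and is handled by generic finiteness of $\Phi_\Lc$ onto the surface $S$; note in particular that $\deg S\ge 3$ rules out a common linear factor among the $\phi_i$, so two generic members of $\Lc$ indeed have no common component.
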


\begin{proof}
It suffices to prove that $\Phi_\Lc:\CP^2\to S$ is a bi-rational isomorphism,
since $\Phi_\Lc^{-1}$ will then be automatically defined over real numbers.
\begin{comment}
If the degree of $S$ is 2, then the statement is clear: it is well known
that any quadratic rational map $\CP^2\dashrightarrow S$, whose
image is not in a conic, is a bi-rational isomorphism.
Thus we can assume that the degree of $S$ is bigger than 2.
\end{comment}

  Take a generic line $L\subset\CP^2$.
  The image $\Phi_\Lc(L)$ is an irreducible conic $C$ (otherwise $\Phi_\Lc$
  takes lines to lines, hence by the M\"obius theorem $\Phi_\Lc(\RP^2)$
  is a planar set, a contradiction with the assumption that $\phi_i$
  are linearly independent).
  Let $P_L$ be the 2-dimensional plane containing $C$.
  If $P_L\cap S=C$ set-theoretically, then $C$ consists of tangency points between $S$ and $P_L$.
  There cannot be infinitely many such conics $C$, hence we can assume that
  $P_L\cap S$ is strictly bigger than $C$.
  Since $\Phi_\Lc^{-1}(P_L)$ is a conic containing $L$, this conic
  is the union of $L$ and a line $L'\ne L$.
  The restriction of $\Phi_\Lc$ to each of the two lines $L$, $L'$
  has degree one, and the images of these two lines are different.
  Hence the degree of $\Phi_{\Lc}$ is one.
\end{proof}

\begin{lem}
  \label{l:rat}
  Suppose that a rational map $f:\RP^2\dashrightarrow\RP^2$ takes lines to conics.
  Then $f$ has degree at most two, unless $f(\RP^2)$ is a subset of a conic.
\end{lem}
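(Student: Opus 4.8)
The plan is to complexify, reduce to a clean dichotomy for the image of a generic line, and then force $f|_L$ to be birational onto that image by a Riemann--Hurwitz count. We may work with $f:\CP^2\dashrightarrow\CP^2$, since a real rational map taking lines to conics complexifies to one with the same property and the same degree. If $f$ is not dominant, its image is an irreducible curve $C$; as every line maps into $C$ and into a conic, an image $C$ not contained in a conic would force each line into the finite set $C\cap(\text{conic})$, hence $f$ constant. So either $C$ is a line or a conic (the exceptional case), or $f$ is dominant; assume the latter and that $f(\CP^2)$ lies in no conic. For a generic line $L$ the image $f(L)$ is irreducible, being the image of $\CP^1$, so it is either a line or an irreducible conic. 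If $f(L)$ is a line for generic $L$, then $f$ takes lines to lines and the theorem of M\"obius shows $f$ is projective (degree one), unless its image lies in a line, which is again a conic. Hence we may assume that for generic $L$ the curve $C_L:=\overline{f(L)}$ is a smooth irreducible conic.

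Because a generic line avoids the finite base locus of $f$, the three forms defining $f$ restrict to $L$ without a common factor, so $f|_L:L\to C_L$ is a morphism between two copies of $\CP^1$; writing $d=\deg f$ and letting $m$ denote the degree of $f|_L$ onto its image, one has $d=2m$ since $C_L$ has degree $2$. The whole lemma thus reduces to the claim $m=1$. To prove it I would apply Riemann--Hurwitz to $f|_L:L\to C_L$: as both curves are rational, the ramification satisfies $\sum_x(e_x-1)=2m-2$, so $m\ge 2$ would produce at least one ramification point.

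Next I would locate the ramification points. A point $x\in L$ is a ramification point of $f|_L$ exactly when the tangent direction $T_xL$ lies in $\ker df_x$; in particular $df_x$ is then singular, so $x$ lies on the Jacobian (critical) curve $J$ of $f$, and moreover $L$ is tangent at $x$ to the kernel line of $df_x$. The key step is a genericity count: assigning to each $x\in J$ (where $df_x$ has rank one, which holds off a finite set) the unique line through $x$ in the direction $\ker df_x$ defines a rational map $J\dashrightarrow\CP^{2*}$, whose image is at most a curve. A generic line $L$ therefore avoids this image, so $f|_L$ has no ramification point, forcing $m=1$ and hence $d=2$.

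The step I expect to be the main obstacle is precisely this dimension count: one must verify that ramification of $f|_L$ cannot come from any source other than the kernel directions along $J$ --- which is exactly why the reductions above matter, since a generic $L$ misses the base points of $f$ (so the restriction introduces no spurious ramification) and $C_L$ is smooth (so the target contributes none) --- and that the bad lines genuinely sweep out only a proper subvariety of $\CP^{2*}$. As a cross-check and alternative finish, any a priori bound $\deg f\le 3$ would instead let Lemma~\ref{l:cub-qua1} conclude $\deg f\le 2$ at once; such a bound can be extracted from the Veronese planarization $v\circ f:\CP^2\dashrightarrow\CP^5$ together with Theorem~\ref{T:dual-is-rational} applied to it and to its dual, which bounds $\deg(v\circ f)$ and hence $\deg f$.
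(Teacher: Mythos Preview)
Your main argument is correct, but it is considerably more elaborate than the paper's. The paper dispatches the lemma in three lines: after complexifying, if $\deg f>2$ then the restriction of $f$ to a generic line is many-to-one onto its image, and this already contradicts the fact that a generic point of $\CP^2$ has only finitely many preimages under the dominant map $f$. Spelled out, for generic $y$ the fiber $f^{-1}(y)$ is finite; a generic line $L$ through one preimage $p$ meets $f^{-1}(y)$ only at $p$, and $df_p$ is invertible, so $(f|_L)^{-1}(y)=\{p\}$ scheme-theoretically, forcing $f|_L$ to have degree one onto its image. Your Riemann--Hurwitz route reaches exactly the same conclusion $m=1$, but through the machinery of the Jacobian curve $J$ and the rational map $J\dashrightarrow\CP^{2*}$ given by kernel directions. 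That machinery is sound (and your list of caveats---base points, rank-zero points, smoothness of $C_L$---covers the genuine subtleties), so what you gain is a more geometric picture of where ramification could come from; what the paper's argument buys is brevity and the avoidance of Riemann--Hurwitz altogether.

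One correction: your proposed cross-check at the end does not deliver the bound you need. The Veronese planarization $v\circ f$ lands in $\CP^5$, so Theorem~\ref{T:dual-is-rational} bounds its dual by degree $\tfrac{5\cdot4}{2}=10$, and a second application bounds $\deg(v\circ f)$ by $10$ as well, giving only $\deg f\le 5$. That is not strong enough to invoke Lemma~\ref{l:cub-qua1}, which requires degree at most $3$. This does not affect your main argument, which stands on its own.
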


\begin{proof}
  Replace $f$ with its complexification $f:\CP^2\dashrightarrow\CP^2$.
  It also takes lines to conics.
  Suppose that $f$ is a rational map of degree $>2$.
  Then the restriction of $f$ to a generic line is many-to-one.
  This contradicts the fact that a generic point has finitely many
  preimages under $f$.
\end{proof}

\begin{proof}[Proof of Theorem \ref{t:linsys3}]
 By Theorem \ref{t:2-plan}, there exists an open subset $U'\subset U$
 such that the map $F|_{U'}$ is trivial, or co-trivial, or rational
 of degree at most 3.
 If $F|_{U'}$ is a trivial planarization, then $f(U')$ is a subset of a
 conic from $\Lc$, i.e. case $(1)$ of the theorem takes place.
 If $F|_{U'}$
 is a co-trivial planarization, then, by definition of $F$,
 there is a linear net $\Lc'\subset\Lc$ such that $f$ takes all
 lines to $\Lc'$-curves.
 In this case, by Proposition \ref{p:linsys2}, the map $f$ is a local
 inverse of some quadratic rational map, i.e. case $(2)$ or the
 theorem takes place.

 Suppose now that $F|_{U'}$ is a rational map.
 If the degree of $S$ is two, then case $(4)$ of the theorem takes place.
 Otherwise, since $\Phi_\Lc^{-1}:\Phi_\Lc(\RP^2)\dashrightarrow \RP^2$ is a bi-rational map
 by Lemma \ref{l:Phi-birat},
 the composition $f|_{U'}=\Phi_{\Lc}^{-1}\circ F|_{U'}$ is a
 rational map.
 Since it takes lines to conics, it follows from Lemma \ref{l:rat}
 that $f|_{U'}$ is in fact a quadratic rational map, i.e. case $(3)$
 of the theorem takes place.
\end{proof}

\end{document}